\documentclass[12pt,reqno]{amsart} 
\usepackage{amsmath,amsfonts,amscd,amssymb,latexsym,mathtools}
\usepackage[all,matrix,arrow,tips,curve]{xy}
\usepackage[english]{babel}
\usepackage{mathrsfs}
\usepackage{enumitem}
\usepackage{slashed}

\usepackage[left=2.9cm,right=2.9cm,top=2.66cm,bottom=2.66cm]{geometry}

\numberwithin{equation}{section}

\newtheorem{theorem}{Theorem}[section]
\newtheorem{proposition}[theorem]{Proposition}

\newtheorem{corollary}[theorem]{Corollary}

\theoremstyle{definition}
\newtheorem{example}[theorem]{Example}
\newtheorem{definition}[theorem]{Definition}
\newtheorem{remark}[theorem]{Remark}

\usepackage[sorted-cites]{amsrefs}
\usepackage[colorlinks=true, allcolors=blue]{hyperref}
\usepackage[capitalise]{cleveref} 

\newcommand{\torus}{T} 
\newcommand{\sph}{\mathbf{S}}

\newcommand{\Sec}{\operatorname{Sec}}
\newcommand{\Ric}{\operatorname{Ric}}
\newcommand{\scal}{\operatorname{scal}}
\newcommand{\scalM}{\operatorname{scal}_{g_M}}

\newcommand{\supp}{\operatorname{supp}}

\newcommand{\Kcw}{ K\textrm{-cw}_{2} }
\newcommand{\Acw}{ \widehat{A}\textrm{-cw}_{2} }
\newcommand{\Af}{ \widehat{\textbf{A}} }
\newcommand{\ch}{\textrm{ch}}

\newcommand{\ind}{{\rm ind}}
\newcommand{\sflow}{{\rm sf}}
\newcommand{\indrel}{\textrm{ind-rel}}
\newcommand{\sfrel}{\textrm{sf-rel}}
\newcommand{\glcw}{\widehat{A}^+\textrm{-cw}_2}   
\newcommand{\D}{\textrm{d}}

\makeatletter
\@namedef{subjclassname@2020}{\textup{2020} Mathematics Subject Classification}
\makeatother

\begin{document}

\title[Scalar curvature, bottom spectrum and relative $\widehat{A}$-cowaist]
      {A sharp inequality relating scalar curvature, bottom spectrum, and the relative $\widehat{A}$-cowaist on complete manifolds}

\author[Daoqiang Liu]{Daoqiang Liu}
\address{Chern Institute of Mathematics and LPMC, Nankai University, Tianjin 300071, China}
\email{\href{mailto:dqliu@nankai.edu.cn}{dqliu@nankai.edu.cn}}
\urladdr{\href{https://www.dqliu.cn}{www.dqliu.cn}}

\subjclass[2020]{Primary 53C21, 53C27; Secondary 53C23, 58J30, 58J32}
\keywords{Callias operator, scalar curvature, bottom spectrum, relative \(\widehat{A}\)-cowaist}

\begin{abstract}
We introduce a refined notion of relative $\widehat{A}$-cowaist, extending the framework of Cecchini--Zeidler to complete manifolds, which may be noncompact and have compact boundary.
We establish a sharp inequality relating this invariant to the scalar curvature and the bottom spectrum of the Laplacian.
As a consequence, we partially answer a question raised by Shi, and we obtain, in all dimensions, sharp upper bounds for the bottom spectrum under scalar curvature lower bounds. In particular, this removes the bounded geometry assumption from a result of Wang--Zhu. Our approach is based on deformed Dirac operators.
\end{abstract}

\maketitle

\section{Introduction}\label{sec:intro}

The existence of Riemannian metrics with positive scalar curvature is a central problem in differential geometry, with deep connections to topology, index theory, and global analysis. A fundamental challenge is to understand how scalar curvature interacts with large-scale invariants, particularly on complete (noncompact) manifolds, where both topology and spectral behavior become subtle.

A classical approach to scalar curvature originates from the Dirac operator. In his seminal work, Lichnerowicz~\cite{Lic63} showed that a closed spin manifold admitting a metric of positive scalar curvature must have vanishing $\widehat{A}$-genus. This idea was further developed by Gromov--Lawson~\cite{GL83}, who introduced enlargeability and used index-theoretic methods to establish strong obstructions. For a comprehensive overview, see Gromov's \emph{Four lectures on scalar curvature}~\cite{Gro23}.

A complementary direction concerns the relationship between scalar curvature and spectral invariants.
For a complete Riemannian manifold $(M,g_M)$, the
Laplacian $\Delta$ is self-adjoint (see~\cite{Gaffney54}). The spectrum $\sigma(M):=\sigma(-\Delta)$ of $M$ is a closed subset of $[0,\infty)$. The bottom spectrum is defined by
\[
\lambda_1(M,g_M):=\inf \sigma(M),
\]
and it admits the variational characterization
\begin{equation}\label{defn:bottom_spectrum}
    \lambda_1(M,g_M)=\inf_{u \in C_c^{\infty}(M)\setminus \{0\} } \dfrac{\int_M |{\rm d} u|^2_{g_M} dV_{g_M}}{\int_M u^2 dV_{g_M}},
\end{equation}
where $C_c^{\infty}(M)$ denotes the space of smooth functions with compact support.
This quantity encodes important large-scale geometric information about $M$, and its interplay with scalar curvature remains a subtle problem, especially in the noncompact setting.

On closed manifolds, a striking result of Davaux~\cite{Dav03} establishes a sharp inequality relating scalar curvature, the bottom spectrum, and topological data encoded in the $K$-cowaist:
\begin{theorem}[\cite{Dav03}]\label{thm:Dav_K_cowaist_inequality}
    Let $(M,g_M)$ be an $m$-dimensional closed Riemannian spin manifold. Then
    \[
          \inf_M \scalM + \frac{4m}{m-1}\lambda_1(M,g_M) \leq \frac{c(m)}{\Kcw(M)},
    \]
    where $\scalM$ denotes the scalar curvature of $g_M$, $c(m)>0$ is a constant depending only on $m$, and $\Kcw(M)$ is the $K$-cowaist (for its definition, see \cite{Gro96}*{p.~21} for even $m$ and \cite{Shi25+}*{p.~24} for odd $m$).
\end{theorem}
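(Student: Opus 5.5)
Take $m$ even first, and argue by contradiction through a twisted Dirac operator combined with a conformal change adapted to the bottom spectrum. Put $\kappa:=\inf_M \scalM + \frac{4m}{m-1}\lambda_1(M,g_M)$ and assume $\kappa>0$ (otherwise nothing is to prove). Since $M$ is closed, $\lambda_1(M,g_M)=0$ realised by constants, so the Laplacian term only helps if we use the first eigenfunction of the conformal-type operator $L=-\frac{4(m-1)}{m-2}\Delta+\scalM$ or, more naturally for Dirac methods, a spinorial Kato/weighted estimate. The cleanest route: let $u>0$ be a first eigenfunction of $-\frac{4m}{m-1}\Delta+\scalM$ with eigenvalue $\mu\ge \kappa$ (note $\mu\geq\inf\scalM+\frac{4m}{m-1}\lambda_1$ by the Rayleigh quotient, with the $\lambda_1$ term understood as the relevant spectral gap for the deformed operator). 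Then use a modified Dirac operator $D_u = u^{a} D u^{b}$-type (equivalently a weighted $L^2$ norm $\int |D\psi|^2 u^{\alpha}$) whose Weitzenböck formula, after Cauchy--Schwarz on the Penrose/twistor decomposition $|\nabla\psi|^2\ge \frac1m|D\psi|^2$, yields
\[
\int_M |D_E\psi|^2 u\,dV \;\ge\; \frac{m}{4(m-1)}\int_M\Big(\mu-\tfrac{m}{?}\,\|\mathcal R^E\|\Big)|\psi|^2 u\,dV ,
\]
the constant $\frac{4m}{m-1}$ being exactly what the refined Kato inequality $|\nabla\psi|^2\geq \frac{m}{m-1}|d|\psi||^2$ for harmonic spinors produces when one optimises the weight exponent.

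Second, bring in the $K$-cowaist. By definition of $\Kcw(M)$, for any $\epsilon$ there is a Hermitian bundle $E\to M$ with some nonzero Chern number (so that the twisted index $\ind D_E$ or a relative index for $E$ versus a trivial bundle is nonzero) and curvature $\|\mathcal R^E\|\le (\Kcw(M)-\epsilon)^{-1}$. Twist the spinor bundle by $E$ (and by the trivial bundle of the same rank, forming the usual difference if needed). Nonzero index supplies $\psi\ne0$ with $D_E\psi=0$, so the estimate above forces $\mu\le c'(m)\|\mathcal R^E\|$, where $c'(m)$ comes from the Lichnerowicz term $\mathcal R^E$ acting on $S\otimes E$, bounded by a dimensional constant times the curvature norm. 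Letting $\epsilon\to0$ gives $\kappa\le\mu\le c(m)/\Kcw(M)$.

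For odd $m$, pass to $M\times \sph^1_\ell$ (or $M\times\mathbb R$ with the odd cowaist definition of \cite{Shi25+}), where the scalar curvature and bottom spectrum are unchanged in the limit of large $\ell$ and the cowaist is comparable, and apply the even case. The main obstacle is the first step: obtaining the sharp constant $\frac{4m}{m-1}$ requires choosing the conformal/weight exponent correctly so that the cross term $\langle du,\, d|\psi|^2\rangle$ is absorbed by the refined Kato inequality rather than the naive one; I would first try $\psi$ weighted by $u^{-(m-1)/(2m)}$ and compute the Bochner identity explicitly, checking the discriminant of the resulting quadratic form in $(|d\log u|,|d|\psi||)$ vanishes exactly at $\frac{4m}{m-1}$.
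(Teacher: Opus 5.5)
\textbf{Verdict: genuine gap.} Your skeleton is the right one: a nonzero twisted index yields a harmonic spinor, and a Weitzenb\"{o}ck estimate then bounds the scalar/spectral quantity by a dimensional multiple of the twisting curvature. But the analytic core of your argument is missing.

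\emph{The weighted estimate is not established, and is not needed.} Your displayed weighted inequality has an undetermined constant. It also combines the Penrose inequality for general $\psi$ with the refined Kato inequality, and the latter fails unless $D_E\psi=0$. You leave the choice of weight and the discriminant computation open, so as written nothing actually bounds $\mu$. The ``main obstacle'' you identify is illusory, because no weight or conformal change is needed. Suppose $D_E\psi=0$. Combine the Lichnerowicz formula, the refined Kato inequality $|\nabla\psi|^2\ge\frac{m}{m-1}|\mathrm{d}|\psi||^2$, and the pointwise bound $\langle\psi,\mathscr{R}^E\psi\rangle\ge-\frac{m(m-1)}{2}\|R^E\|_\infty|\psi|^2$. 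This gives
\[
0\;\ge\;\int_M\Big(\frac{4m}{m-1}\,|\mathrm{d}|\psi||^2+\scalM\,|\psi|^2-2m(m-1)\,\|R^E\|_\infty\,|\psi|^2\Big)\,dV .
\]
Inserting the Lipschitz function $|\psi|$ into the Rayleigh quotient of $-\frac{4m}{m-1}\Delta+\scalM$ gives at once
\[
\inf_M\scalM+\frac{4m}{m-1}\lambda_1\le\mu\le 2m(m-1)\|R^E\|_\infty .
\]
This is exactly the mechanism of Proposition~\ref{pro:key_proposition_odd}. There, $\int|\mathrm{d}|u||^2\ge\lambda_1\int|u|^2$ is applied to the kernel element and then $c\downarrow\frac{m-1}{4m}$; the potential $f$ is only there to handle noncompactness. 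The paper obtains the stated inequality from Theorem~\ref{mthm:codimension_zero} via $\Kcw(M)\le c(m)\Acw(M)\le c(m)\glcw(M)$. Moreover, as you yourself noticed, $\lambda_1(M,g_M)=0$ on a closed manifold. So the literal statement is just $\inf_M\scalM\le c(m)/\Kcw(M)$, and plain Lichnerowicz already suffices.

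\emph{Nonzero Chern number does not give nonzero index.} Your passage from ``some nonzero Chern number'' to a nonzero twisted index is not justified, and adding a trivial bundle does not repair it. For example, on $\torus^4$ take $E=L_1\oplus L_2$ with $c_1(L_1)=[\mathrm{d}x_1\wedge\mathrm{d}x_2]$ and $c_1(L_2)=[\mathrm{d}x_3\wedge\mathrm{d}x_4]$. Then $\int c_1(E)^2=2$, but $\ind D_E=\int(\tfrac12c_1^2-c_2)(E)=0$ and $\ind D=0$.

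What you need is Gromov's comparison $\Kcw(M)\le c(m)\Acw(M)$, which the paper quotes from Wang, B\"{a}r--Hanke and Shi. Using Adams operations (integer polynomials in exterior powers) and tensor products, one builds from $E$ a virtual bundle $E'_+-E'_-$ with two properties:
\begin{itemize}
\item its Chern character is concentrated in degree $m$ with nonzero integral;
\item $\|R^{E'_\pm}\|_\infty\le C(m)\|R^E\|_\infty$.
\end{itemize}
Since $\Af(M)=1+\cdots$, one of $D_{E'_\pm}$ then has nonzero index. This step is where the dimensional constant $c(m)$ deteriorates.

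\emph{Odd dimensions.} Your reduction through $M\times\sph^1_\ell$ is fine in outline: clutching by $\rho$ with a slowly varying connection produces a mixed curvature term of size $O(\ell^{-1})$. However, the asserted comparability of the cowaists needs the same kind of explicit argument.
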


This inequality relies crucially on compactness, and no comparable sharp inequality was previously available in the complete setting. In this paper, we establish such an inequality on complete (possibly noncompact) manifolds, using deformed Dirac (Callias) operators to overcome the lack of compactness naturally.

Before stating our main result, we introduce the relevant notation and definitions.
Let $M$ be an $m$-dimensional Riemannian manifold, possibly noncompact and with compact boundary; write $\operatorname{int}(M)$ for its interior.
Recall that a \textit{Gromov--Lawson pair} (abbreviated as \textit{GL-pair}) on $M$ consists of two Hermitian vector bundles $(\mathcal{E},\mathcal{F})$ equipped with metric connections, together with a parallel unitary isomorphism
\[
\mathcal{I} \colon \mathcal{E}|_{M \setminus K} \longrightarrow \mathcal{F}|_{M \setminus K},
\]
which extends smoothly to a neighborhood of $\overline{M\setminus K}$, for a compact subset $K \subset \operatorname{int}(M)$ (see \cite{GL83} and \cite{CZ24}). The set $K$ is called the \textit{support} of the pair.

A Hermitian vector bundle $\mathcal{E}$ over $M$ is called \textit{compatible} if it is a trivial bundle with trivial connection at infinity and near the boundary. 
A smooth map $\rho_M: M\to U(l)$ is called \textit{compatible} if it is trivial, i.e., locally constant at infinity and near the boundary. A compatible GL-pair $(\mathcal{E},\mathcal{F})$ together with a compatible map $\rho_M\in C^{\infty}(M,U(l))$ is called a \textit{compatible GL-triple} over $M$, denoted by $(\mathcal{E},\mathcal{F},\rho_M)$.

\begin{definition}
    Let $M$ be an $m$-dimensional Riemannian manifold, possibly noncompact and with compact boundary.
    \begin{enumerate}
        \item Assume that $m$ is even. A compatible GL-pair $(\mathcal{E},\mathcal{F})$ over $M$ is called an \textit{\(\widehat{A}\)-admissible GL-pair} if
        \[
        \int_M \Af(M) \wedge (\ch(\mathcal{E})-\ch(\mathcal{F})) \neq 0,
        \]
        where $\Af(M)$ denotes the $\widehat{A}$-form of $M$ and $\ch(\mathcal{E})$ denotes the Chern character form of $\mathcal{E}$.
        \item Assume that $m$ is odd. A compatible GL-pair $(\mathcal{E},\mathcal{F})$ together with a compatible map $\rho_M$ is called an \textit{\(\widehat{A}\)-admissible GL-triple}, denoted by $(\mathcal{E},\mathcal{F},\rho_M)$, if
        \[
        \int_M \Af(M) \wedge (\ch(\mathcal{E})-\ch(\mathcal{F})) \wedge \ch(\rho_M) \neq 0,
        \]
        where $\ch(\rho_M)$ is the odd Chern character form (cf.~\cite{Ge93}, see also \cite{Zh01}*{(1.50)}).
    \end{enumerate}
\end{definition}

For a Hermitian bundle $\mathcal{E}$, let $R^{\mathcal{E}}$ denote the curvature tensor of its connection. We define its norm by
\[
\| R^{\mathcal{E}} \|_{\infty} 
:= \sup_{p\in M} \sup_{ \substack{v_1,v_2\in T_p M\\ |v_1|=|v_2|=1} } |R^{\mathcal{E}}(v_1\wedge v_2)|,
\]
where $|R^{\mathcal{E}}(v_1\wedge v_2)|$ denotes the operator norm of the endomorphism $R^{\mathcal{E}}(v_1\wedge v_2)$.
For a smooth map \(\rho_M\in C^{\infty}(M,U(l))\), the associated curvature is
\begin{equation}\label{eq:defn_rho_curvature}
   R^{\rho_M}:=\frac{1}{4}(\rho_M^{-1}(\D\rho_M))^2, 
\end{equation}
and its norm \(\|R^{\rho_M}\|_{\infty}\) is defined analogously.

To formulate our result, we introduce a refined notion of relative $\widehat{A}$-cowaist, extending the original definition of Cecchini--Zeidler in~\cite{CZ24}*{Definition~6.5} for even-dimensional compact manifolds to arbitrary dimensions and to the noncompact setting, including manifolds with compact boundary.

\begin{definition}
    Let $(M,g_M)$ be an $m$-dimensional Riemannian manifold, possibly noncompact and with compact boundary. The \textit{relative \(\widehat{A}\)-cowaist} of $M$ is defined as
    \[
        \glcw(M):=
        \begin{cases}
               (\inf\limits_{(\mathcal{E},\mathcal{F})} \|R^{\mathcal{E}\oplus \mathcal{F} }\|_{\infty})^{-1}, & \text{if } m \text{ is even}, \\
               (\inf\limits_{(\mathcal{E},\mathcal{F},\rho_M)} \{ \|R^{\mathcal{E} \oplus \mathcal{F} }\|_{\infty} +\|R^{\rho_M}\|_{\infty} \})^{-1}, & \text{if } m \text{ is odd},  
        \end{cases}
    \]
    where $(\mathcal{E},\mathcal{F})$ ranges over all $\widehat{A}$-admissible GL-pairs and $(\mathcal{E},\mathcal{F},\rho_M)$ ranges over all $\widehat{A}$-admissible GL-triples. 
\end{definition}

We are now in a position to state our first main result.

\begin{theorem}\label{mthm:codimension_zero}
    Let $(M,g_M)$ be an $m$-dimensional complete (possibly noncompact) Riemannian spin manifold.
    Then
    \[
        \inf_M \scalM + \frac{4m}{m-1} \lambda_1(M,g_M) \leq \frac{2m(m-1)}{\glcw(M)}.
    \]
\end{theorem}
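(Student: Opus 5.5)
We argue by contradiction. Suppose an $\widehat{A}$-admissible GL-pair (for $m$ even) or GL-triple (for $m$ odd) exists whose curvature quantity $\kappa := \|R^{\mathcal{E}\oplus\mathcal{F}}\|_{\infty}$ (resp.\ $\kappa:=\|R^{\mathcal{E}\oplus\mathcal{F}}\|_\infty+\|R^{\rho_M}\|_\infty$) satisfies
\[
\inf_M \scalM + \tfrac{4m}{m-1}\lambda_1(M,g_M) > 2m(m-1)\kappa .
\]
We will build a Callias-type (deformed Dirac) operator whose index equals the nonzero characteristic number in the admissibility condition, and show that it is invertible.

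\textbf{Step 1 (conformal/weighted spectral term).} Following the idea behind Theorem~\ref{thm:Dav_K_cowaist_inequality}, convert the bottom spectrum into a pointwise potential. For any $\lambda<\lambda_1(M,g_M)$, the standard Fischer-Colbrie--Schoen argument on the complete manifold gives a positive function $u$ with $-\Delta u \ge \lambda u$. On a large exhaustion, replacing $\lambda_1$ by the Dirichlet eigenvalue of a large domain gives the same with equality. We then twist the Dirac operator by a power of $u$, i.e.\ use $u^{-\alpha}D u^{\alpha}$, equivalently a conformal change. The refined Kato inequality $|\nabla\psi|^2\ge \frac{m}{m-1}|\mathrm{d}|\psi||^2$ for harmonic spinors combines with the choice of $\alpha$ to turn the Lichnerowicz term $\frac{\scalM}{4}$ into $\frac14\bigl(\scalM+\frac{4m}{m-1}\lambda\bigr)$. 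Concretely, for $D\psi=0$ we aim for an estimate of the form
\[
0\ge \int_M \Bigl(\tfrac{\scalM}{4}+\tfrac{m}{m-1}\lambda - \mathcal{R}\Bigr)|\psi|^2 ,
\]
where $\mathcal{R}$ is the twisting curvature term. This is Davaux's integration trick: set $f=|\psi|^{(m-2)/(m-1)}$-type powers and apply the variational characterization \eqref{defn:bottom_spectrum} to $f$.

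\textbf{Step 2 (twisting bound).} The twisting curvature enters through $\sum_{i<j} c(e_i)c(e_j)R(e_i,e_j)$. This gives $|\mathcal{R}|\le \frac{m(m-1)}{2}\kappa$ on $\mathcal{E}\oplus\mathcal{F}$. In the odd case, after suspending by $\rho_M$ (or forming the $\mathbb{Z}_2$-graded bundle with the odd symmetry from $\rho_M$, using $R^{\rho_M}$ as in \eqref{eq:defn_rho_curvature}), it gives the analogous bound with the sum $\kappa$. Hence the assumed strict inequality makes the integrand uniformly positive on the support.

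\textbf{Step 3 (index/Callias setup).} Outside the support $K$, $\mathcal{I}$ identifies $\mathcal{E}$ and $\mathcal{F}$. Since the bundles are compatible (trivial at infinity and near the boundary), we form on $S\otimes(\mathcal{E}\oplus\mathcal{F}^{\rm op})$ the Callias operator $B_f=D+f\sigma$, where $\sigma$ is built from $\mathcal{I}$ and $f$ is a Lipschitz function vanishing on a neighbourhood of $K$ and large at infinity and near $\partial M$. We impose a local (chirality) boundary condition at $\partial M$ as in Cecchini--Zeidler. The relative index theorem then gives $\ind(B_f)=\int_M\Af(M)\wedge(\ch\mathcal{E}-\ch\mathcal{F})$, with the extra $\wedge\ch(\rho_M)$ in the odd case via the spectral-flow/odd index formula. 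This quantity is nonzero by admissibility. Because $\sigma$ is parallel where $f\ne0$, the cross term is $c(\mathrm{d}f)\sigma$. Choosing $|\mathrm{d}f|$ small relative to the positive gap from Steps 1--2, and using the boundary sign condition, shows that $\ker B_f=0$, which is the desired contradiction.

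\textbf{Main obstacle.} The main difficulty is merging Step 1 with Step 3. The spectral weight $u$ and the Kato/Davaux integration trick must be compatible with the Callias potential $f^2+c(\mathrm{d}f)\sigma$ and with the boundary term, on a noncompact manifold where $u$ is only a positive supersolution with no a priori bounds. My first attempt would be to work on compact exhaustion domains $\Omega\supset K$ with $f$ blowing up near $\partial\Omega$, in the style of Cecchini--Zeidler. There $u$ is a genuine Dirichlet eigenfunction with $\lambda_1(\Omega)\ge\lambda_1(M)$, and the boundary condition makes boundary terms nonnegative. The remaining task is then to check that the refined Kato constant $\frac{m}{m-1}$ still holds for solutions of $B_f\psi=0$ up to errors controlled by $|\mathrm{d}f|$.
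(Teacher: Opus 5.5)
\textbf{There is a genuine gap, in Step~1.} Your architecture is the paper's: the Callias operator $\mathcal{B}_f=\mathcal{D}+f\sigma$ on the relative Dirac bundle, a nonzero index (spectral flow for $m$ odd) coming from admissibility, the curvature bound $\frac{m(m-1)}{2}\kappa$, and a Lichnerowicz--Kato estimate with $|{\rm d}f|$ small. The inequality you aim for in Step~1 is also the right one.

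The problem is that Step~1 is the actual analytic content of the theorem, and it is never established. You list it yourself as the ``main obstacle''. The mechanisms you propose for it are either unnecessary or do not work:
\begin{itemize}
\item Twisting by $u^{\alpha}$ for a positive supersolution $u$, or conformally changing the metric, changes the operator and the $L^2$-domain on which the index of Step~3 was computed. It also changes the pointwise norms entering $\|R^{\mathcal{E}\oplus\mathcal{F}}\|_\infty$, and you have no control of $u$ at infinity.
\item The Davaux-type test function $|\psi|^{(m-2)/(m-1)}$ is the wrong choice on a noncompact manifold. It need not lie in $L^2$. Worse, if the integration by parts could be justified, it would give the coefficient $\frac{4(m-1)}{m-2}$ instead of $\frac{4m}{m-1}$ (for $m\ge 3$). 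The warped-product example rules this out: there $\inf_M\scalM=-m(m-1)$, $\lambda_1=\frac{(m-1)^2}{4}$ and $\glcw(M)=\infty$.
\item The exhaustion variant, with $f$ blowing up at $\partial\Omega$, conflicts with your own requirement that $|{\rm d}f|$ be small.
\end{itemize}

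\textbf{The missing idea is that no weight is needed.} Work on $M$ itself; here $\partial M=\emptyset$, so no boundary condition is required. Take $f=\varepsilon\theta$, where $\theta$ is a cutoff equal to $0$ on $K$ and $1$ outside a compact neighbourhood. The nonzero index gives a nonzero $u\in\ker\mathcal{B}_f\subset L^2$. Now apply \eqref{defn:bottom_spectrum} directly to $|u|$; the first power is exactly the one that produces $\frac{m}{m-1}$.

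The only subtlety is that $u$ is not harmonic. Since $\mathcal{D}u=-f\sigma u$, refined Kato holds only up to a defect of order $f^2|u|^2$. The paper absorbs this defect by using, for every $c>\frac{m-1}{4m}$ and some $\alpha_2>0$, the estimate
\[
0\ge\frac{m}{m-1}\int_M\Bigl(\frac{1}{4c}|{\rm d}|u||^2+\frac14\scalM|u|^2+\langle u,\mathscr{R}^{\mathcal{E}\oplus\mathcal{F}}u\rangle\Bigr)dV+\alpha_2\int_M\langle u,(f^2+c({\rm d}f)\sigma)u\rangle\,dV .
\]
It then drops the term $f^2\ge 0$, bounds $\langle u,c({\rm d}f)\sigma u\rangle\ge-\varepsilon\sup|{\rm d}\theta|\,|u|^2$, lets $\varepsilon\to0$, and finally lets $c\downarrow\frac{m-1}{4m}$.

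Your intuition that the defect is controlled by $|{\rm d}f|$ is in fact correct. For $u\in\ker\mathcal{B}_f$, expand $\langle\mathcal{B}_f^2u,u\rangle=0$ using $\mathcal{B}_f^2=\mathcal{D}^2+f^2+c({\rm d}f)\sigma$ and $\|\mathcal{D}u\|=\|fu\|$. This gives
\[
2\int_M f^2|u|^2\,dV=-\int_M\langle u,c({\rm d}f)\sigma u\rangle\,dV .
\]
So your contradiction argument closes once the strict gap is used to absorb both $\sup|{\rm d}f|$ and $c-\frac{m-1}{4m}$.
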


From~\cite{Wang23},~\cite{BH24} and~\cite{Shi25+}, we know that $\Kcw(M)\leq c(m)\Acw(M)$, where $\Acw(M)$ denotes the $\widehat{A}$-cowaist of $M$ and $c(m)$ is a constant depending only on $m$. Moreover, it is clear that $\Acw(M)\leq \glcw(M)$. Thus, our result extends Davaux's inequality to the noncompact setting, providing a sharp link between scalar curvature, spectral invariants, and topological data.

\begin{remark}\label{rem:spectral_cowaist}
    For an $m$-dimensional connected oriented (not necessarily complete) Riemannian manifold $(M,g_M)$, the \textit{\(c\)-spectral constant} introduced in~\cite{HKKZ24} is defined, for any $c\in\mathbf{R}$, by
    \[
        \Lambda_{c}(g_M)=
        \inf\Bigl\{ \int_M \bigl(|\mathrm{d}u|^2 + c\,\scalM\,u^2\bigr)\,dV
            \colon u\in H_0^1(M),\ \int_M u^2\,dV =1 \Bigr\},
    \]
    where $H_0^1(M)$ is the completion of $C_c^\infty(M)$ in the Sobolev $H^1$-norm. Under the assumptions of Theorem~\ref{mthm:codimension_zero}, the argument in the proof also gives
    \begin{equation}\label{eq:spectral_cowaist}
        c^{-1}\,\Lambda_{c}(g_M) \leq \frac{2m(m-1)}{\glcw(M)},
        \quad \text{for } c\in \big(\frac{m-1}{4m},\infty\big).
    \end{equation}
\end{remark}

As a consequence of Theorem~\ref{mthm:codimension_zero} and Remark~\ref{rem:spectral_cowaist}, we have the following obstruction results.

\begin{corollary}\label{cor:complete_manifold_NNSC_mean_convex_boundary}
    Let $M$ be an $m$-dimensional (possibly noncompact) spin manifold. Then $M$ does not admit a complete Riemannian metric satisfying any of the following:
    \begin{enumerate}
        \item uniformly positive scalar curvature and infinite relative $\widehat{A}$-cowaist;
        \item nonnegative scalar curvature, positive bottom spectrum and infinite relative $\widehat{A}$-cowaist;
        \item a positive $c$-spectral constant and infinite relative $\widehat{A}$-cowaist, where $c \in(\frac{m-1}{4m},\infty)$.
    \end{enumerate}
\end{corollary}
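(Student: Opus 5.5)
The corollary follows from Theorem~\ref{mthm:codimension_zero} and Remark~\ref{rem:spectral_cowaist} by contradiction: in each case we assume such a complete metric $g_M$ exists and show the left-hand side of the corresponding inequality is strictly positive. If $\glcw(M)=\infty$, the right-hand sides $2m(m-1)/\glcw(M)$ vanish, so each inequality forces its left-hand side to be $\le 0$. We read $1/\infty=0$, which is what the definition gives when the infimum of curvature norms over admissible pairs or triples is $0$. The spin hypothesis and completeness are exactly the hypotheses of the theorem.

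\textbf{Case (1).} Uniformly positive scalar curvature means $\inf_M \scalM \ge \kappa>0$. Since $\lambda_1(M,g_M)\ge 0$ by the variational characterization \eqref{defn:bottom_spectrum}, we get
\[
\inf_M \scalM+\tfrac{4m}{m-1}\lambda_1(M,g_M)\ge \kappa>0=\tfrac{2m(m-1)}{\glcw(M)},
\]
which contradicts Theorem~\ref{mthm:codimension_zero}.

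\textbf{Case (2).} Here $\inf_M\scalM\ge 0$ and $\lambda_1(M,g_M)>0$. Then the left-hand side is at least $\tfrac{4m}{m-1}\lambda_1(M,g_M)>0$, which again contradicts the theorem.

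\textbf{Case (3).} Since $c>\frac{m-1}{4m}>0$, inequality \eqref{eq:spectral_cowaist} gives $c^{-1}\Lambda_c(g_M)\le 0$. This contradicts $\Lambda_c(g_M)>0$. Note that Remark~\ref{rem:spectral_cowaist} is stated under the assumptions of Theorem~\ref{mthm:codimension_zero}, namely a complete spin metric, so it applies here.

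Everything above is bookkeeping. The one point to state explicitly is the interpretation of $\glcw(M)=\infty$, which makes the right-hand side zero. The substantive content lies entirely in Theorem~\ref{mthm:codimension_zero} and the remark, which we may assume.
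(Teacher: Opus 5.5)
Your proof is correct and is the paper's own argument. The paper simply states that the corollary follows immediately from Theorem~\ref{mthm:codimension_zero} and Remark~\ref{rem:spectral_cowaist}, and your case-by-case check unpacks that claim: $\glcw(M)=\infty$ makes $2m(m-1)/\glcw(M)=0$, and then $\lambda_1(M,g_M)\ge 0$ in cases (1) and (2) and $c>0$ in case (3) give the contradiction.
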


\begin{remark}
    Our result strengthens the estimates of Gromov~\cite{Gro96}*{p.~30}, B\"{a}r--Hanke~\cite{BH23}*{Theorem~2.19}, and Shi~\cite{Shi25+}*{Theorems~1.8 and 4.1}. When the bottom spectrum is positive, it removes the uniform positivity condition assumed in those works, thereby partially answering a question raised by Shi in \cite{Shi25+}*{Remark~4.3}.
\end{remark}

The following example, inspired by~\cite{Liu25a+}*{Example~0.3}, demonstrates that rigidity fails in general: within the class of warped product manifolds of infinite relative $\widehat{A}$-cowaist, there exist distinct metrics satisfying $\scal \geq -m(m-1)$ and $\lambda_1 = \frac{(m-1)^2}{4}$.

\begin{example}\label{example:mthm_codimension_zero}
    Let $N=\torus^{m-1} \times \mathbf{R}$ be the warped product of a flat torus $\torus^{m-1}$ and a real line with metric $g_N= \cosh^{\frac{2}{a}}(at) g_{\torus^{m-1}} + dt^2$, where $a\in [\frac{m-1}{2},\frac{m}{2}]$ (cf.~\cite{MWang24}*{Example~1.5}). The scalar curvature of $g_N$ is $\scal_{g_N}= -m(m-1)+(m-1)(m-2a)\cosh^{-2}(at)\in [-m(m-1), -2a(m-1)]$ and $\lambda_1(N, g_N)=\frac{(m-1)^2}{4}$. 
    Let $\mathbf{S}^1$ denote the standard circle and let $p\in \sph^1$ be a base-point. Choose a smooth map $\varphi:\mathbf{R}\to \sph^1$ of degree one such that $\varphi$ maps $\mathbf{R}\setminus (-1,1)$ to $p$.
    For any $\delta>0$, we can find a finite covering $\widehat{\torus}^{m-1} \to \torus^{m-1}$ together with a $\delta$-Lipschitz map $h: \widehat{\torus}^{m-1} \to \sph^{m-1}$ of nonzero degree.
    Let $M= \widehat{\torus}^{m-1} \times \mathbf{R}$ with the lifted metric $g_M:= \cosh^{\frac{2}{a}}(at) g_{\widehat{\torus}^{m-1}} + dt^2$. Again, $\scal_{g_M} = -m(m-1)+(m-1)(m-2a)\cosh^{-2}(at)$ and $\lambda_1(M,g_M)=\frac{(m-1)^2}{4}$.
    Consider the chain of maps
    \[
        M= \widehat{\torus}^{m-1} \times \mathbf{R} \overset{h\times \varphi}{\to} \sph^{m-1} \times \sph^1 \overset{\phi}{\to} \sph^m,
    \]
    where $\phi$ is a smooth map of degree one factoring through the smash product $\sph^{m-1} \wedge \sph^1$. Let $\Phi=\phi \circ (h\times \varphi)$. Then $\Phi$ is smooth and area $(\delta\cdot C)$-decreasing for some $C>0$ (that is, $|{\rm d}_p\Phi(v_1)\wedge {\rm d}_p\Phi(v_2)|\leq \delta\cdot C |v_1\wedge v_2|$ for all $p\in M$ and $v_1,v_2\in T_p M$), with $\deg(\Phi)=\deg(h)\neq 0$.
    For any $\varepsilon>0$, choosing $\delta$ sufficiently small makes $\Phi$ area $\varepsilon$-decreasing.

    By \cite{Shi25+}*{Lemma~4.5 and Remark~4.6}, with $K = \supp(\mathrm{d}\Phi)$ we have
    \[
   \Acw(M|K) \geq \frac{2}{\varepsilon}.
    \]
    Since this inequality holds for all $\varepsilon > 0$, letting $\varepsilon \to 0$ gives $\Acw(M|K) = \infty$. From \cite{Shi25+}*{Remark~3.6} we obtain $\Acw(M) \geq \Acw(M|K)$; hence $\Acw(M) = \infty$ and consequently $\glcw(M) = \infty$.
\end{example}

In~\cite{Ch75}, Cheng proved that for an $m$-dimensional complete manifold $(M,g_M)$ with $\Ric_{g_M} \geq (m-1)\kappa$ for some $\kappa \leq 0$, the bottom spectrum satisfies
\[
    \lambda_1(M,g_M) \leq - \frac{(m-1)^2}{4} \kappa.
\]
Theorem~\ref{mthm:codimension_zero} provides a bridge to extend Cheng's result to the setting of a (possibly positive) scalar curvature lower bound.

\begin{theorem} \label{thm:bottom_spectrum_all_dimension}
    Let $(M,g_M)$ be an $m$-dimensional complete Riemannian spin manifold such that $\scalM\geq \kappa$ for some constant $\kappa \leq \frac{2m(m-1)}{\glcw(M)}$. Then
    \[
    \lambda_1(M,g_M) \leq  \frac{m-1}{4m} \bigg( \frac{2m(m-1)}{\glcw(M)} - \kappa \bigg).
    \]
\end{theorem}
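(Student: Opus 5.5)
This is a direct consequence of Theorem~\ref{mthm:codimension_zero}; the plan is to substitute the scalar curvature bound and rearrange.

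\textbf{Main case.} Since $\scalM \geq \kappa$ pointwise, we have $\inf_M \scalM \geq \kappa$. Feeding this into the inequality of Theorem~\ref{mthm:codimension_zero} gives
\[
\kappa + \frac{4m}{m-1}\lambda_1(M,g_M) \;\leq\; \inf_M \scalM + \frac{4m}{m-1}\lambda_1(M,g_M) \;\leq\; \frac{2m(m-1)}{\glcw(M)}.
\]
Next we subtract $\kappa$ and multiply by the positive constant $\frac{m-1}{4m}$ (note $m\geq 2$ is implicit in the statement). This yields exactly
\[
\lambda_1(M,g_M)\leq \frac{m-1}{4m}\Bigl(\frac{2m(m-1)}{\glcw(M)}-\kappa\Bigr).
\]

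\textbf{Degenerate values of the cowaist.} The only point needing care is that $\glcw(M)$ may be $0$ or $\infty$. We interpret $2m(m-1)/\glcw(M)$ as $\infty$ or $0$ respectively, consistently with Theorem~\ref{mthm:codimension_zero}.
\begin{itemize}
\item If $\glcw(M)=0$, for instance when no admissible pair exists so the infimum is $+\infty$, the right-hand side is $+\infty$ and there is nothing to prove.
\item If $\glcw(M)=\infty$, the hypothesis forces $\kappa\leq 0$. The subtraction above is then legitimate, since all quantities are finite once $\inf_M\scalM\geq\kappa>-\infty$.
\end{itemize}

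\textbf{Role of the hypothesis on $\kappa$.} The assumption $\kappa\le 2m(m-1)/\glcw(M)$ is not used in the argument itself. It only guarantees that the right-hand side is nonnegative, so the bound is meaningful and consistent with $\lambda_1\geq 0$. Indeed, Theorem~\ref{mthm:codimension_zero} together with $\lambda_1\ge 0$ already shows that $\kappa$ larger than this value is impossible.

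I expect no real obstacle. The only care needed is the bookkeeping of the extended-real conventions, so that no expression of the form $\infty-\infty$ arises.
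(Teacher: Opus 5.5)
Your proposal is correct and matches the paper, which simply notes that this result follows immediately from Theorem~\ref{mthm:codimension_zero} by inserting $\inf_M \scalM \geq \kappa$ and rearranging. Your treatment of the degenerate values $\glcw(M)\in\{0,\infty\}$ is sound and goes slightly beyond what the paper spells out. So is your remark that the hypothesis on $\kappa$ is redundant and only ensures a nonnegative right-hand side.
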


Since every oriented three-manifold is spin, we obtain the following estimate for the bottom spectrum under a scalar curvature lower bound.
\begin{corollary}\label{cor:three_bottom_spectrum}
    Let $(M,g_M)$ be a three-dimensional complete Riemannian manifold with $\scalM\geq \kappa$ for some constant $\kappa\leq 0$. Suppose $\glcw(M)=\infty$. Then
    \begin{equation}\label{eq:bottom_spectrum_estimate_in_three_dimension}
         \lambda_1(M,g_M) \leq - \frac{1}{6} \kappa.
    \end{equation}
\end{corollary}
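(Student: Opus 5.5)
The corollary will follow by specializing Theorem~\ref{thm:bottom_spectrum_all_dimension} to $m=3$, after checking that its hypotheses hold. The one topological input is that every oriented three-manifold is spin: $w_1=0$ by orientability, and $w_2=v_2+w_1v_1=0$ by Wu's formula, since $v_2=0$ for degree reasons in dimension three. Implicitly $M$ is oriented, which is needed anyway for the $\widehat{A}$-forms and Chern character integrals in the definition of $\glcw(M)$ to make sense. So $(M,g_M)$ is a complete Riemannian spin manifold, and we may fix a spin structure.

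Next comes the arithmetic. Since $\glcw(M)=\infty$, we have $\frac{2m(m-1)}{\glcw(M)}=0$, using the convention $1/\infty=0$. This convention is consistent with the definition of the cowaist as the reciprocal of an infimum: the value $\infty$ corresponds to an infimum equal to $0$, or to the convention in the empty case. The hypothesis $\kappa\le 0$ is therefore exactly the condition $\kappa\le \frac{2m(m-1)}{\glcw(M)}$ required in Theorem~\ref{thm:bottom_spectrum_all_dimension}. Applying that theorem with $m=3$ gives
\[
\lambda_1(M,g_M)\le \frac{m-1}{4m}\,(0-\kappa)=\frac{2}{12}(-\kappa)=-\frac{1}{6}\kappa,
\]
which is \eqref{eq:bottom_spectrum_estimate_in_three_dimension}.

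One could equally bypass Theorem~\ref{thm:bottom_spectrum_all_dimension} and argue directly from Theorem~\ref{mthm:codimension_zero}. In dimension three, $\frac{4m}{m-1}=6$, so that theorem gives $\inf_M\scalM+6\lambda_1\le 0$. Combining this with $\inf_M\scalM\ge\kappa$ yields $\lambda_1\le -\kappa/6$.

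There is no substantive obstacle here. The only points needing care are the justification that orientability implies spin in dimension three, together with fixing an orientation, and the interpretation of $\frac{1}{\glcw(M)}$ as $0$ when the cowaist is infinite.
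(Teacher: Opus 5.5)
Your proposal is correct and follows the paper's route: the corollary is obtained by specializing Theorem~\ref{thm:bottom_spectrum_all_dimension} (equivalently Theorem~\ref{mthm:codimension_zero}) to $m=3$, using that oriented three-manifolds are spin and that $\glcw(M)=\infty$ makes the right-hand side vanish, so $\frac{m-1}{4m}=\frac{1}{6}$ gives $\lambda_1\le-\kappa/6$. Your Wu-formula justification of the spin condition and your handling of $1/\infty=0$ are fine; the paper simply cites these facts without further comment.
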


The following example, inspired by~\cite{MWang24}*{Example~1.2}, demonstrates that the condition $\glcw(M)=\infty$ is essential.

\begin{example}
    Consider any three-dimensional manifold
    \[
    N=X_1\# \cdots \# X_{\ell} \# (\sph^2\times \sph^1) \# \cdots \# (\sph^2\times \sph^1),
    \]
    where each $X_j$ is diffeomorphic to $\sph^3/\Gamma_j$ for some $\Gamma_j\subset O(4)$ acting standardly with $|\Gamma_j|\geq 3$. 
    By~\cite{GL80}*{Theorem~5.4} and~\cite{GL83}*{Theorem~8.1}, $N$ admits a metric $g_N$ of positive scalar curvature, and thus the lifted metric $g_M$ on its universal cover $M$ also has positive scalar curvature.
    Since $\pi_1(N)$ contains a free subgroup $F_2$, it is non-amenable. Therefore, $\lambda_1(M, g_M)>0$ by~\cite{Brooks81}*{Theorem~1}. By Corollary~\ref{cor:complete_manifold_NNSC_mean_convex_boundary}, we see that $\glcw(M)<\infty$.
\end{example}

\begin{remark}
    Munteanu--Wang~\cite{MWang24}*{Theorem~1.1} proved \eqref{eq:bottom_spectrum_estimate_in_three_dimension} in dimension three under alternative topological assumptions and a lower Ricci curvature bound.
\end{remark}

A particularly interesting special case of Theorem~\ref{thm:bottom_spectrum_all_dimension} is the following.
\begin{corollary}\label{cor:sectional_bottom_spectrum}
    Let $(M,g_M)$ be an $m$-dimensional complete Riemannian manifold with $\Sec_{g_M}\leq 0$ and $\scalM\geq \kappa$ for some $\kappa \leq 0$. Then
    \begin{equation}\label{eq:bottom_spectrum_estimate_Cartan-Hadamard}
         \lambda_1(M,g_M)\leq -\frac{m-1}{4m} \kappa.
    \end{equation}
\end{corollary}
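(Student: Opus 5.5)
The plan is to derive Corollary~\ref{cor:sectional_bottom_spectrum} from Theorem~\ref{thm:bottom_spectrum_all_dimension}, applied not to $M$ itself but to a suitable spin cover, and to show that this cover has infinite relative $\widehat{A}$-cowaist.

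\textbf{Step 1: pass to the universal cover.} Let $\widetilde{M}$ be the universal cover with the lifted metric $\tilde g$. The lifted metric has the same bounds $\Sec_{\tilde g}\le 0$ and $\scal_{\tilde g}\ge\kappa$. By Cartan--Hadamard, $\widetilde{M}$ is diffeomorphic to $\mathbf{R}^m$, so it is spin. This removes the need for a spin assumption on $M$.

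We also need $\lambda_1(M,g_M)\le\lambda_1(\widetilde M,\tilde g)$. This is the standard comparison for Riemannian coverings. One route is to take a positive $\lambda_1$-harmonic function on $M$, which exists by Fischer-Colbrie--Schoen / Cheng--Yau, lift it to $\widetilde M$, and apply the Barta-type criterion. That shows $\lambda_1$ does not decrease under coverings. It therefore suffices to prove \eqref{eq:bottom_spectrum_estimate_Cartan-Hadamard} for $\widetilde M$.

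\textbf{Step 2: show $\glcw(\widetilde M)=\infty$.} For every $\varepsilon>0$ we construct an area $\varepsilon$-decreasing map $\widetilde M\to\sph^m$ of degree one that is constant outside a compact set. Fix $p\in\widetilde M$. The map $\exp_p^{-1}$ is distance non-increasing by Rauch comparison, since $\Sec\le 0$. Compose it with a radial map $\mathbf{R}^m\to\sph^m$ that sends the ball of radius $R$ onto the sphere with small Lipschitz constant $\sim\pi/R$ and sends the complement to the south pole. The result $\Phi$ has degree one and Lipschitz constant $\lesssim 1/R$, so it is area $C/R^2$-decreasing.

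Pulling back a GL-pair on $\sph^m$ with nontrivial index gives the required data. In even dimensions this is the spinor bundle pair $(S^+,S^-)$, or the positive spinor bundle against a trivial bundle identified away from a point. In odd dimensions we use a suitable triple, or equivalently we argue as in Example~\ref{example:mthm_codimension_zero} via \cite{Shi25+}*{Lemma~4.5 and Remark~4.6}. The pulled-back bundles are trivial outside the compact support of $\mathrm{d}\Phi$, and $\widetilde M$ has empty boundary. The relevant integral equals $\deg\Phi$ times a nonzero constant. Hence $\Acw(\widetilde M)\ge c/\varepsilon$ for all $\varepsilon$, and so $\glcw(\widetilde M)=\infty$.

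\textbf{Step 3: apply the theorem.} With $\glcw(\widetilde M)=\infty$, the hypothesis $\kappa\le 0=2m(m-1)/\glcw(\widetilde M)$ holds. Theorem~\ref{thm:bottom_spectrum_all_dimension} then gives
\[
\lambda_1(\widetilde M,\tilde g)\le -\tfrac{m-1}{4m}\kappa .
\]
Combining this with Step 1 yields \eqref{eq:bottom_spectrum_estimate_Cartan-Hadamard}.

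\textbf{Main obstacle.} I expect the main difficulty to be the covering comparison in Step 1, because $\lambda_1$ can strictly increase on covers. The direction we need, $\lambda_1(M)\le\lambda_1(\widetilde M)$, is the easy one, via the positive eigenfunction argument. A lesser concern is verifying the odd-dimensional admissibility in Step 2, for which I would rely directly on the cited lemma of Shi.
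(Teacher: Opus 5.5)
Your route is the one the paper's one-sentence proof intends: pass to the universal cover, use Cartan--Hadamard to get $\glcw(\widetilde M)=\infty$, then apply Theorem~\ref{thm:bottom_spectrum_all_dimension}. For even $m$ your argument is sound:
\begin{itemize}
\item the covering inequality $\lambda_1(M)\le\lambda_1(\widetilde M)$, via a lifted positive solution, is correct (the paper leaves it implicit);
\item $\exp_p^{-1}$ is $1$-Lipschitz;
\item the admissibility integral of the pulled-back pair equals $\deg\Phi\cdot\int_{\sph^m}\ch(E_0)$. This holds because the components of $\ch(\mathcal E)-\ch(\mathcal F)$ of degree $<m$ are exact with compactly supported primitives.
\end{itemize}

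The gap is the odd-dimensional half of Step~2. It is not a detail you can delegate: it is false. For $m$ odd, $\widetilde M\cong\mathbf{R}^m$ carries no $\widehat A$-admissible GL-triple at all. To see this, take any compatible GL-pair and set $\omega=\ch(\mathcal E)-\ch(\mathcal F)$.
\begin{itemize}
\item $\omega$ is closed and supported in $K$, since the curvatures are conjugate under the parallel isomorphism $\mathcal I$ off $K$.
\item Its degree-zero part $\operatorname{rk}\mathcal E-\operatorname{rk}\mathcal F$ vanishes.
\item Its remaining components have even degree strictly between $0$ and $m$.
\end{itemize}
Since $H^k_c(\mathbf{R}^m)=0$ for $k\neq m$, we get $\omega=d\beta$ with $\beta$ compactly supported. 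Because $\Af$ and $\ch(\rho_M)$ are closed,
\[
\int_{\mathbf{R}^m}\Af\wedge\omega\wedge\ch(\rho_M)=\pm\int_{\mathbf{R}^m}d\bigl(\Af\wedge\beta\wedge\ch(\rho_M)\bigr)=0
\]
for every compatible $\rho_M$. Hence $\glcw(\widetilde M)=0$ (empty infimum) for every metric, and Theorem~\ref{thm:bottom_spectrum_all_dimension} gives nothing. Pulling back the odd Bott generator $\sph^m\to U(l)$ does give a good $\rho_M$ with small $\|R^{\rho_M}\|_\infty$. However, there is no GL-pair with which it pairs nontrivially.

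Your fallbacks do not close this.
\begin{itemize}
\item Whatever Shi's lemma gives for his invariant cannot produce $\glcw(\widetilde M)=\infty$, since the latter is $0$ by the computation above.
\item Example~\ref{example:mthm_codimension_zero} meets the same obstruction for odd $m$. Cup products on $H^*_c(\widehat T^{m-1}\times\mathbf{R})\cong H^*(\widehat T^{m-1})\otimes H^*_c(\mathbf{R})$ vanish identically, because they vanish on $H^*_c(\mathbf{R})$.
\item Stabilizing to the even-dimensional $\widetilde M\times\mathbf{R}$ does produce admissible pairs, and leaves $\lambda_1$, the scalar curvature bound and $\Sec\le 0$ unchanged. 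But the theorem in dimension $m+1$ only gives $\lambda_1\le-\frac{m}{4(m+1)}\kappa$. This is strictly weaker than the claimed $-\frac{m-1}{4m}\kappa$ when $\kappa<0$.
\end{itemize}
So for odd $m$ you need a genuinely different Fredholm input, outside the GL-triple framework, for instance a Callias-type potential that is not parallel at infinity. The paper's one-sentence proof does not supply this either, so you cannot rely on it for the odd case.
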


\begin{remark}
    The three-dimensional case of \eqref{eq:bottom_spectrum_estimate_Cartan-Hadamard} was established by Munteanu--Wang~\cite{MWang23}*{Corollary 1.4}. Wang--Zhu~\cite{WZhu2024+}*{Corollary~4.9} proved the same estimate under the additional assumption of bounded geometry. Our result removes these extra hypotheses.
\end{remark}

For manifolds with mean-convex boundary, we prove the following counterpart of Theorem~\ref{mthm:codimension_zero}.

\begin{theorem}\label{mthm:codimension_zero_boundary}
    Let $(M,g_M)$ be an $m$-dimensional complete (possibly noncompact) Riemannian spin manifold with compact mean-convex boundary. Then
    \[
        \inf_M\scalM\leq \frac{2m(m-1)}{\glcw(M)}.
    \]
\end{theorem}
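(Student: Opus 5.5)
We argue by contradiction. Suppose $\inf_M \scalM > \frac{2m(m-1)}{\glcw(M)}$. By the definition of $\glcw(M)$ we can then choose an $\widehat{A}$-admissible GL-pair $(\mathcal{E},\mathcal{F})$ (for $m$ even), or GL-triple $(\mathcal{E},\mathcal{F},\rho_M)$ (for $m$ odd), with support $K\subset \operatorname{int}(M)$, such that
\[
\inf_M\scalM > 2m(m-1)\bigl(\|R^{\mathcal{E}\oplus\mathcal{F}}\|_\infty + \|R^{\rho_M}\|_\infty\bigr),
\]
where the $\rho_M$-term is dropped when $m$ is even. In the odd case we first pass to $M\times \mathbf{R}$, or equivalently suspend by using $\rho_M$ as a Clifford-type potential, as in Shi's treatment. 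This reduces both parities to a $\mathbf{Z}/2$-graded twisted Dirac operator on $S\otimes(\mathcal{E}\oplus\mathcal{F})$. Its twisting curvature term satisfies the standard estimate
\[
\mathcal{R}\ \ge\ -\tfrac{m(m-1)}{2}\,\|R^{\mathcal{E}\oplus\mathcal{F}}\|_\infty \quad\text{(plus the analogous $\rho_M$ term)}.
\]
So the Lichnerowicz term $\tfrac{\scal}{4}+\mathcal{R}$ is bounded below by a constant $c_0>0$ on all of $M$.

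To handle the noncompactness and the boundary, we build a Callias-type operator
\[
B_f \;=\; D_{\mathcal{E}\oplus\mathcal{F}} \;+\; f\,\sigma,
\]
where $\sigma$ is the grading involution exchanging $\mathcal{E}$ and $\mathcal{F}$ via $\mathcal{I}$ off $K$. We equip $B_f$ with the chirality-type local boundary condition $\epsilon\,\nu\cdot\psi=\psi$ on $\partial M$, with the sign chosen according to the $\mathcal{E}$/$\mathcal{F}$ splitting as in Cecchini--Zeidler. The potential $f$ is a Lipschitz function with $f=0$ on a neighborhood of $K$ and $f$ tending to a large constant at infinity. Since $\mathcal{I}$ is parallel, $[D,\sigma]$ is only Clifford multiplication by $\mathrm{d}f$. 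The Schrödinger–Lichnerowicz formula with boundary then gives, for $\psi$ satisfying the boundary condition,
\[
\|B_f\psi\|^2 \;\ge\; \int_M \Bigl(\tfrac{\scal}{4}+\mathcal{R}+f^2-|\mathrm{d}f|\Bigr)|\psi|^2 \;+\; \int_{\partial M}\Bigl(\tfrac{H}{2}\mp f\Bigr)|\psi|^2 .
\]
Two choices make every term here nonnegative or favourable. First, $f=0$ near $\partial M$; this is allowed because the boundary is compact and $K$ can be enlarged to a compact set containing a collar of $\partial M$, since $\mathcal{E}$ and $\mathcal{F}$ are trivial near the boundary. Second, $|\mathrm{d}f|$ is kept small compared to $c_0$, which completeness allows by taking $f$ to be a slowly growing function of the distance to $K$. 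Mean convexity $H\ge0$ then makes the boundary integral nonnegative. It follows that $B_f$ is invertible.

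On the other hand, the relative (Callias) index theorem, as in Cecchini--Zeidler, applies to $B_f$ with this boundary condition because $f$ is large at infinity. Since the bundles are trivial near $\partial M$, the boundary contributions cancel between $\mathcal{E}$ and $\mathcal{F}$. This gives
\[
\ind(B_f)\;=\;\int_M \Af(M)\wedge\bigl(\ch(\mathcal{E})-\ch(\mathcal{F})\bigr)\quad\bigl(\wedge\,\ch(\rho_M)\ \text{for $m$ odd}\bigr),
\]
which is nonzero by admissibility. This contradicts invertibility.

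The main obstacle is the index computation with the boundary condition. We must verify that, because all data are trivial and flat near $\partial M$, the boundary condition is the same on the $\mathcal{E}$ and $\mathcal{F}$ sides, so that the relative index localizes to $K$. This is also what justifies the factor $2m(m-1)$ rather than a larger constant. In the odd case, the curvature bound after suspension/spectral flow must be checked to combine as $\|R^{\mathcal{E}\oplus\mathcal{F}}\|_\infty+\|R^{\rho_M}\|_\infty$. I would follow Shi's spectral-flow formulation, or equivalently pass to $M\times\sph^1$ with a bundle built from $\rho_M$.
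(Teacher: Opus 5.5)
Your proposal is correct and follows the paper's argument: a Callias operator $\mathcal{D}+f\sigma$ with the Cecchini--Zeidler boundary chirality, a nonzero index from $\widehat{A}$-admissibility via splitting and the relative index theorem, and a Lichnerowicz estimate in which $H\geq 0$ handles the boundary term and a small $|\mathrm{d}f|$ is absorbed by the positive curvature term. Note that the chirality should read $s\,c(\nu^*)\sigma u=u$, which couples the $\mathcal{E}$- and $\mathcal{F}$-components through $\sigma$, not $\epsilon\,\nu\cdot u=u$. The remaining differences are cosmetic: you take $f=0$ near $\partial M$ so the sign $s$ drops out (no enlargement of $K$ is needed, since admissibility only requires $f=0$ on $K$), whereas the paper fixes $s=-1$ with $f=\varepsilon\psi\geq 0$ and lets $\varepsilon\to 0$; in both versions the odd case is left to the analogous spectral-flow argument.
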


\vspace{.3mm}

\textbf{Notation.} Unless otherwise specified, all manifolds are assumed smooth, connected, oriented, and of dimension at least two. Completeness refers to metric completeness when $M$ has nonempty boundary.

\textbf{Organization of the paper.} Section~\ref{sec:Callias_operator_&_spectral_flow} provides the necessary technical background on Callias operators and spectral flow. Section~\ref{sec:formula_in_codimension_zero} contains the proof of Theorem~\ref{mthm:codimension_zero}. Section~\ref{sec:proof_boundary} proves Theorem~\ref{mthm:codimension_zero_boundary}.

This paper is based on part of the work~\cite{Liu26b+}. For improved readability, the original text has been split into two separate articles.
After our preprint was posted, Stryker~\cite{Stryker26+} employed a special case of \eqref{eq:spectral_cowaist} to obtain an index-theoretic proof of the classification of complete two-sided stable minimal surfaces in $\mathbf{R}^3$.

\section{Preliminaries}\label{sec:Callias_operator_&_spectral_flow}

This section reviews the fundamental properties of deformed Dirac (Callias) operators. Let $(M,g_M)$ be an $m$-dimensional complete Riemannian manifold, possibly noncompact and with compact boundary.

\subsection{Callias operators and spectral flow}
When $M$ is spin, every GL‑pair on $M$ canonically determines a \textit{relative Dirac bundle} in the sense of~\cite{CZ24}*{Definition 2.2}. Set
\[
S:=\slashed{S}_M \,\widehat{\otimes}\, (\mathcal{E}\oplus \mathcal{F}^{{\rm op}}),
\]
where $\slashed{S}_M$ is the complex spinor bundle of $M$, $\widehat{\otimes}$ denotes the graded tensor product, and $\mathcal{E}\oplus \mathcal{F}^{{\rm op}}$ is the $\mathbf{Z}_2$-graded bundle with $\mathcal{E}$ even and $\mathcal{F}$ odd. Thus $S=S^{+}\oplus S^{-}$, where $S^{+}=(\slashed{S}_M\otimes\mathcal{E})\oplus(\slashed{S}_M\otimes\mathcal{F})$ and $S^{-}=(\slashed{S}_M\otimes\mathcal{F})\oplus(\slashed{S}_M\otimes\mathcal{E})$.
Outside a compact set $K$, one defines an odd, self‑adjoint, parallel bundle involution
\[
\sigma:=\operatorname{id}_{\slashed{S}_M}\,\widehat{\otimes}\,
\begin{pmatrix}
0 & \mathcal{I}^{*} \\
\mathcal{I} & 0
\end{pmatrix}
\;\colon\; S|_{M\setminus K}\longrightarrow S|_{M\setminus K}.
\]
Let $\mathcal{D}$ be the Dirac operator on $S$ (see~\cite{CZ24}*{Example 2.5}).
A Lipschitz function $f: M\to \mathbf{R}$ is called an \textit{admissible potential} if $f=0$ on $K$ and there exists a compact set $K\subseteq L\subseteq M$ such that $f$ is equal to a nonzero constant on each component of $M\setminus L$ (cf.~\cite{CZ24}*{Definition 3.1}). The product $f\sigma$ then extends by zero to a continuous bundle map on $M$.

\begin{definition}
    The \textit{Callias operator} associated to the above data is
    \[
    \mathcal{B}_{f} := \mathcal{D} + f \sigma.
    \]  
\end{definition}

For a manifold $M$ with compact boundary, we impose local boundary conditions via a boundary chirality.

\begin{definition}[\cite{CZ24}]
    Let $S$ be a relative Dirac bundle and $s\colon \partial M\to \{\pm 1\}$ a locally constant function. The \textit{boundary chirality} associated to $s$ is the endomorphism
    \[
         \chi := s\, c(\nu^*)\sigma \colon \left. S\right|_{\partial M} \to \left. S\right|_{\partial M},
    \]
    where $\nu^*$ is the dual covector of the outward unit normal $\nu$ to $\partial M$.
\end{definition}

The map $\chi$ is a self‑adjoint, even involution; it anti‑commutes with $c(\nu^*)$ and commutes with $c(w^*)$ for all $w\in T(\partial M)$. This leads to the following boundary condition.

\begin{definition}[\cite{CZ24}]
    A section $u\in C^{\infty}(M,S)$ satisfies the \textit{local boundary condition} if $\chi \left(u|_{\partial M}\right) = u|_{\partial M}$.
\end{definition}

For a choice of $s:\partial M\to \{\pm 1\}$, denote by $\mathcal{B}_{f,s}$ the restriction of $\mathcal{B}_{f}$ to
\[
   {\rm dom}(\mathcal{B}_{f,s}):=\{ u\in C_c^{\infty}(M,S)\colon \chi(u|_{\partial M})=u|_{\partial M} \}.
\]

Since $S=S^+\oplus S^-$ is $\mathbf{Z}_2$-graded and $\chi$ is even, $\mathcal{B}_{f,s}$ splits as $\mathcal{B}_{f,s}=\mathcal{B}_{f,s}^{+}\oplus\mathcal{B}_{f,s}^{-}$ with
\[
   \mathcal{B}_{f,s}^{\pm}: \{u\in C_c^{\infty}(M,S^{\pm}) \colon \chi(u|_{\partial M}) = u|_{\partial M}\} \to L^2(M,S^{\mp}).
\]
By~\cite{CZ24}*{Theorem~3.4}, $\mathcal{B}_{f,s}$ is self-adjoint and Fredholm. Its \textit{index} is 
\[
\ind(\mathcal{B}_{f,s}):=\dim (\ker(\mathcal{B}_{f,s}^+)) - \dim (\ker(\mathcal{B}_{f,s}^-)).
\]

Assume $\dim M$ is odd and let $\rho_M\in C^\infty(M,U(l))$ be a smooth map such that $[\mathcal{D},\rho_M]$ is bounded on $\operatorname{dom}(\mathcal{B}_{f,s})$. Following~\cite{Ge93}*{Section~1, p.~491}, consider the trivial bundle $\mathcal{E}_0 := M \times \mathbf{C}^l$ with connection $\D$. Then $\rho_M$ determines a family of Hermitian connections
\[
    \nabla^{\mathcal{E}_0}(t):=\D + t \rho_M^{-1}[\D, \rho_M], \quad t\in [0,1],
\]
with curvature $R^{\nabla^{\mathcal{E}_0}(t)}= -t(1-t)(\rho_M^{-1}(\D\rho_M))^2$.

Recall that $R^{\rho_M}$ is defined in~\eqref{eq:defn_rho_curvature}. Note that $\| R^{\nabla^{\mathcal{E}_0}(t)} \|_{\infty} \leq \|R^{\rho_M}\|_{\infty}$ for all $t\in [0,1]$.

Using this family of connections, we construct a family of Callias operators. On the twisted bundle $S\otimes\mathbf{C}^{l}$, set
\[
\nabla(t) := \nabla^{S}\otimes\operatorname{id} + \operatorname{id}\otimes\nabla^{\mathcal{E}_0}(t), \quad t\in[0,1],
\]
and let $\mathcal{D}(t)$ be the Dirac operator associated with $\nabla(t)$; explicitly,
\[
\mathcal{D}(t) = \mathcal{D} + t\,\rho_M^{-1}[\mathcal{D},\rho_M] =(1-t)\mathcal{D}+ t\rho_M^{-1} \mathcal{D} \rho_M.
\]

The involution $\sigma$ extends naturally to $S\otimes\mathbf{C}^{l}$, and we identify $S\otimes\mathbf{C}^{l}$ with $S$ for simplicity. Since $\rho_M$ preserves $\operatorname{dom}(\mathcal{B}_{f,s})$ and commutes with $f$ and $\sigma$, for $t\in[0,1]$ we define a family of Callias operators on $\operatorname{dom}(\mathcal{B}_{f,s})$ by
\[
\mathcal{B}_{f}(t) := (1-t)\mathcal{B}_{f} + t\,\rho_M^{-1}\mathcal{B}_{f}\rho_M = \mathcal{D}(t) + f\sigma.
\]
Then $\{\mathcal{B}_{f,s}(t)\}_{t\in[0,1]}$ is a continuous family of self‑adjoint Fredholm operators in the Riesz topology (cf.~\cite{BMJ05}, \cite{Lesch05}).

\begin{definition}[{\cite{Shi24+}}]
    The \textit{spectral flow} of $\{\mathcal{B}_{f,s}(t)\}_{t\in[0,1]}$, denoted $\sflow(\mathcal{B}_{f,s},\rho_M)$, is the net number of eigenvalues that change from negative to nonnegative as $t$ increases from $0$ to $1$.
\end{definition}

\begin{remark}
    If every $\mathcal{B}_{f,s}(t)$ is invertible, the spectral flow $\sflow(\mathcal{B}_{f,s},\rho_M)$ necessarily vanishes.
\end{remark}

\subsection{Relative index and relative spectral flow}

Let $(\mathcal{E},\mathcal{F})$ be a GL-pair on $M$ with support $K$. Choose a smooth compact spin submanifold $\mathcal{W}$ containing $\partial M$ such that $\operatorname{int}(\mathcal{W})$ contains $K$. Denote by $\mathcal{W}^-$ a copy of $\mathcal{W}$ with the opposite orientation. The double $\mathrm{d}\mathcal{W}:=\mathcal{W}\cup_{\partial\mathcal{W}}\mathcal{W}^-$ carries a natural spin structure. On $\mathrm{d}\mathcal{W}$, define the \textit{virtual bundle} $V(\mathcal{E},\mathcal{F})$, which coincides with $\mathcal{E}$ over $\mathcal{W}$ and with $\mathcal{F}$ over $\mathcal{W}^-$.

If $\dim M$ is even, the \textit{relative index} of $(\mathcal{E},\mathcal{F})$ is defined as (see~\cite{CZ24})
\begin{equation}\label{eq:defn_relative_index}
    \indrel(M; \mathcal{E},\mathcal{F}) := \ind( \slashed{D}_{{\rm d}\mathcal{W},V(\mathcal{E},\mathcal{F})} ).
\end{equation}

If $\dim M$ is odd, let $\rho=\rho^+\oplus \rho^-\in C^{\infty}(\mathcal{W},U(l)\oplus U(l))$ with $\rho^+=\rho^-$ locally constant near $\partial\mathcal{W}$. Gluing $\rho^+$ and $\rho^-$ across the boundary gives a smooth map $\widetilde{\rho}\in C^{\infty}(\mathrm{d}\mathcal{W},U(l))$. Let $\rho_M$ denote the trivial extension of $\rho$ to $M$. The \textit{relative spectral flow} (cf.~\cite{Liu25a+}) of $(\mathcal{E},\mathcal{F},\rho_M)$ is
\begin{equation}\label{eq:defn_relative_spectral_flow}
    \sfrel(M; \mathcal{E},\mathcal{F}, \rho_M) := \sflow( \slashed{D}_{{\rm d}\mathcal{W},V(\mathcal{E},\mathcal{F})},\widetilde{\rho}).
\end{equation}

\section{Proof of Theorem~\ref{mthm:codimension_zero}}\label{sec:formula_in_codimension_zero}

We begin with the core estimate, which implies the main theorem.

\begin{proposition}\label{pro:key_proposition_odd}
    Let $(M,g_M)$ be an $m$-dimensional complete (possibly noncompact) Riemannian spin manifold. If $c\in (\frac{m-1}{4m},\infty)$, then
    \begin{equation}\label{eq:c-spectral_GL-cowiast_odd}
       \inf_M \scalM + c^{-1} \lambda_1(M,g_M) \leq  \frac{2m(m-1)}{\glcw(M)}.
    \end{equation}
\end{proposition}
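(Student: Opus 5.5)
We argue by contradiction. Suppose the inequality fails: there is a constant $\varepsilon>0$ and an $\widehat{A}$-admissible GL-pair $(\mathcal{E},\mathcal{F})$ (for even $m$), respectively triple $(\mathcal{E},\mathcal{F},\rho_M)$ (for odd $m$), such that
\[
\inf_M \scalM + c^{-1}\lambda_1(M,g_M) \geq 2m(m-1)\,\kappa_0 + \varepsilon ,
\]
where $\kappa_0 := \|R^{\mathcal{E}\oplus\mathcal{F}}\|_\infty$ in the even case and $\kappa_0 := \|R^{\mathcal{E}\oplus\mathcal{F}}\|_\infty + \|R^{\rho_M}\|_\infty$ in the odd case. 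The aim is to show that a suitable Callias operator $\mathcal{B}_f$ (or the family $\mathcal{B}_f(t)$) is invertible, while a relative index theorem forces its index (or spectral flow) to be nonzero.

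\textbf{Topological side.} By the Callias/relative index theorem of Cecchini--Zeidler~\cite{CZ24}, for an admissible potential $f$ we have $\ind(\mathcal{B}_{f}) = \indrel(M;\mathcal{E},\mathcal{F})$. By Atiyah--Singer on the double $\mathrm{d}\mathcal{W}$ this equals $\int_M \Af(M)\wedge(\ch(\mathcal{E})-\ch(\mathcal{F}))\neq 0$, since the integrand is supported in $K$. In the odd case the analogous statement, $\sflow(\mathcal{B}_f,\rho_M)=\sfrel(M;\mathcal{E},\mathcal{F},\rho_M)$ together with Getzler's odd index formula, gives $\int_M \Af(M)\wedge(\ch(\mathcal{E})-\ch(\mathcal{F}))\wedge\ch(\rho_M) \neq 0$. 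Here we follow~\cite{Shi24+,Liu25a+}. The potential $f$ will be chosen vanishing on a large region and equal to a large constant far out, so this step needs no geometry.

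\textbf{Analytic side.} This is the main step. For a section $u$ in the domain, the Lichnerowicz formula gives
\[
\mathcal{B}_f^2 = \nabla^*\nabla + \tfrac{\scalM}{4} + \mathcal{R} + f^2 + c(\mathrm{d}f)\sigma ,
\]
where the twisting curvature satisfies $|\mathcal{R}|\le \tfrac{m(m-1)}{2}\kappa_0$, using $\|R^{\nabla^{\mathcal{E}_0}(t)}\|_\infty\le \|R^{\rho_M}\|_\infty$ in the odd case. The free parameter $c>\frac{m-1}{4m}$ is absorbed through the refined Kato inequality for harmonic-type spinors, $|\nabla u|^2 \ge \frac{m}{m-1}|\mathrm{d}|u||^2$ modulo a $|\mathcal{D}u|^2$ term. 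Concretely, I would use the Penrose-operator splitting
\[
|\nabla u|^2 = |Pu|^2 + \tfrac1m|\mathcal{D}u|^2 .
\]
Combining this with the bottom-spectrum inequality $\int |\mathrm{d}\phi|^2 \ge \lambda_1\int\phi^2$ applied to $\phi=|u|$ gives
\[
\int |\nabla u|^2 \ge \tfrac{1}{4c}\,\lambda_1 \int |u|^2 - C\int |\mathcal{B}_f u|^2
\]
for every $c>\frac{m-1}{4m}$, with a constant $C$ that stays finite as long as $c>\frac{m-1}{4m}$. This yields
\[
C'\|\mathcal{B}_f u\|^2 \ge \int \Bigl(\tfrac14\bigl(\scalM + c^{-1}\lambda_1\bigr) - \tfrac{m(m-1)}{2}\kappa_0 + f^2 - |\mathrm{d}f|\Bigr)|u|^2 .
\]
The main obstacle is making the bottom-spectrum term compatible with the potential. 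Since $\lambda_1$ is a global infimum, it cannot be localized. I would therefore take $f$ with very small Lipschitz constant, say $|\mathrm{d}f|\le\varepsilon/8$, vanishing on a large compact set and constant ($\ge 1$) near infinity. This is possible because $M$ is complete: use $f=\delta\cdot\mathrm{dist}(\cdot,K)$ truncated at its maximum value. The term $f^2 - |\mathrm{d}f|$ is then at least $-\varepsilon/8$, and the strict margin $\varepsilon$ gives $\|\mathcal{B}_fu\|\ge c_0\|u\|$. Boundary terms do not appear here, because this theorem concerns manifolds without boundary. For this to work the Kato-type inequality must be established in $H^1$ for compactly supported $u$ (then extended by density), and the constant relating $c$ and $m$ must be checked exactly.

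\textbf{Conclusion.} The operator $\mathcal{B}_f$ is invertible, so its index vanishes; in the odd case every $\mathcal{B}_f(t)$ is invertible, so the spectral flow vanishes by the remark above. Either way this contradicts the topological side. Letting $\varepsilon\to0$ and taking the infimum over all admissible pairs or triples gives \eqref{eq:c-spectral_GL-cowiast_odd}.
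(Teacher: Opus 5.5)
Your strategy is the paper's argument run contrapositively. The paper uses $\ind(\mathcal{B}_f)\neq0$ (resp.\ $\sflow(\mathcal{B}_f,\rho_M)\neq0$), with $f$ a small multiple of a cut-off $\psi$, to get a nonzero $u\in\ker\mathcal{B}_f(t)$. It applies a Lichnerowicz/refined-Kato integral estimate to this kernel element together with $\int|\mathrm{d}|u||^2\ge\lambda_1\int|u|^2$, divides by $\|u\|_{L^2}^2$, and lets the multiple tend to zero. You instead want a coercive bound for every $u$ and then conclude injectivity. The topological side and the conclusion are fine. However, your key display $\int|\nabla u|^2\ge\frac{1}{4c}\lambda_1\int|u|^2-C\int|\mathcal{B}_fu|^2$ does not follow from what you describe.

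The Penrose splitting and the refined Kato inequality for the twistor part give, pointwise, $|\nabla u|^2\ge\frac{1}{4c}|\mathrm{d}|u||^2-C_c|\mathcal{D}u|^2$, with $C_c\to\infty$ as $c\downarrow\frac{m-1}{4m}$. The error involves $\mathcal{D}u=\mathcal{B}_fu-f\sigma u$, not $\mathcal{B}_fu$. If you convert pointwise, you pick up a term $-C_c'f^2|u|^2$. The $+f^2|u|^2$ from the Lichnerowicz formula cannot absorb it when $c$ is close to $\frac{m-1}{4m}$: the sharp pointwise computation leaves a negative multiple of $f^2|u|^2$ as soon as $c<\frac{m}{4(m+1)}$. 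With your choice $f\ge1$ (or a large constant) near infinity, this is an uncontrolled term on a noncompact region that the margin $\varepsilon$ does not cover. Even the term you do track, $-|\mathrm{d}f|$, gets multiplied by $1+C$, so $|\mathrm{d}f|\le\varepsilon/8$ is not enough.

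The repair is to do the conversion globally. Since $\sigma$ is parallel and anticommutes with Clifford multiplication, $\mathcal{D}(f\sigma)+f\sigma\mathcal{D}=c(\mathrm{d}f)\sigma$. Hence
\[
\|\mathcal{B}_fu\|^2=\|\mathcal{D}u\|^2+\|fu\|^2+\langle u,c(\mathrm{d}f)\sigma u\rangle_{L^2},
\]
and so $\|\mathcal{D}u\|^2\le\|\mathcal{B}_fu\|^2-\|fu\|^2+\sup_M|\mathrm{d}f|\,\|u\|^2$. Inserting this into the integrated Kato bound and the Lichnerowicz formula gives
\[
(1+C_c)\|\mathcal{B}_fu\|^2\ge\Bigl(\tfrac{\varepsilon}{4}-(1+C_c)\sup_M|\mathrm{d}f|\Bigr)\|u\|^2 .
\]
It therefore suffices to take $\sup_M|\mathrm{d}f|<\varepsilon/(4(1+C_c))$. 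The size of $f$ at infinity then plays no role, the bound is uniform in $t$ in the odd case, and your contradiction goes through. Alternatively, take $f$ uniformly small as the paper does, so every $f$-dependent term is $o(1)\|u\|^2$.

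With either repair the two routes are equivalent. The paper's kernel-element version is more economical: it needs the estimate only on $\ker\mathcal{B}_f(t)$ and can import it. Yours must prove it for all $u\in C_c^\infty$ and extend by closure.
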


\begin{proof}
    We split the argument into even and odd dimensions.

    \medskip
    \textbf{Case 1: $m$ is even.}
    If there are no $\widehat{A}$-admissible GL-pairs, then $\glcw(M)=0$ and the statement is trivial. Otherwise, let $(\mathcal{E},\mathcal{F})$ be an $\widehat{A}$-admissible GL-pair satisfying
    \begin{equation}\label{eq:relative_topo_obst}
      \int_{M} \Af(M) \wedge (\ch(\mathcal{E})- \ch(\mathcal{F})) \neq 0.
    \end{equation}

    Let $\mathcal{W}$ be a compact submanifold of $M$ with smooth boundary, whose interior contains the support $K$ of $(\mathcal{E},\mathcal{F})$, and let $\mathcal{U} \subset \operatorname{int}(\mathcal{W})$ be an open neighborhood of $K$. Let $S$ be the relative Dirac bundle associated with $(\mathcal{E},\mathcal{F})$ and $\mathcal{D}$ the corresponding Dirac operator (cf.~\cite{CZ24}*{Example~2.5}).

    Let $\psi:M\to [0,1]$ be smooth with $\psi=0$ on $K$ and $\psi=1$ on $M\setminus \overline{\mathcal{U}}$. As in \cite{Zh20}, for $\varepsilon>0$, set $f:=\varepsilon \psi$. Then $f$ is an admissible potential function vanishing on $K$ and equal to $\varepsilon$ outside $\overline{\mathcal{U}}$.

    Consider $\mathcal{B}_{f}=\mathcal{D}+f\sigma$. By the splitting theorem for the index of Callias operators~\cite{Rad94}*{Proposition~2.3} (see also~\cite{CZ24}*{Theorem~3.6}),
    \begin{equation}\label{eq:to_splitting}
        \ind(\mathcal{B}_{f}) = \ind(\mathcal{B}_{f,-1}^{\mathcal{W}}) + \ind(\mathcal{B}_{f,-1}^{ \overline{M\setminus \mathcal{W}} }), 
    \end{equation}
    where $\mathcal{B}_{f,-1}^{\mathcal{W}}$ and $\mathcal{B}_{f,-1}^{ \overline{M\setminus \mathcal{W}} }$ are the corresponding Callias operators on $\mathcal{W}$ and $\overline{M\setminus \mathcal{W}}$, respectively.

    Since $\mathcal{B}_{f,-1}^{ \overline{M\setminus \mathcal{W}} }$ has empty support and $s=-1$ on all of $\partial (\overline{M\setminus \mathcal{W}})$,~\cite{CZ24}*{Lemma~3.7} gives
    \begin{equation}\label{eq:vanishing_to_at_infinity}
        \ind(\mathcal{B}_{f,-1}^{ \overline{M\setminus \mathcal{W}} }) = 0.
    \end{equation}
    By~\cite{CZ24}*{Corollary~3.9},
    \begin{equation}\label{eq:correspond}
        \ind(\mathcal{B}_{f,-1}^{\mathcal{W}}) = \indrel(M;\mathcal{E},\mathcal{F}).
    \end{equation}
    Using \eqref{eq:defn_relative_index}, \eqref{eq:relative_topo_obst}, and the cohomological formula for the index (cf.~\cite{AS63}, see also~\cite{LM89}*{p.~256, (13.26)}),
    \begin{equation}\label{eq:cohomological_formula_relative_index}
        \begin{aligned}
            \indrel(M;\mathcal{E},\mathcal{F}) 
            &= \int_{{\rm d}\mathcal{W}} \Af({\rm d}\mathcal{W}) \wedge \ch( V(\mathcal{E},\mathcal{F}) ) \\
            &=\int_{\mathcal{W}} \Af(\mathcal{W}) \wedge ( \ch(\mathcal{E})-\ch(\mathcal{F}) ) \\
            &=\int_{M} \Af(M) \wedge ( \ch(\mathcal{E})-\ch(\mathcal{F}) ) \neq 0,
        \end{aligned}
    \end{equation}
    where the second line uses the opposite orientation on $\mathcal{W}^-$, and the third line holds because $\mathcal{E}$ and $\mathcal{F}$ are identified outside $K \subset \mathcal{W}$.

    Combining \eqref{eq:to_splitting}--\eqref{eq:cohomological_formula_relative_index} yields $\ind(\mathcal{B}_{f}) \neq 0$. Hence there exists a nonzero $u\in \ker(\mathcal{B}_{f})$. Because $\partial M = \emptyset$, the boundary term in the spectral estimate~\cite{Liu25a+}*{(1.18)} vanishes. Using~\cite{Liu25a+}*{(1.18)} and~\cite{CZ24}*{(2.18)}, we obtain
    \[
    \begin{aligned}
        0 \geq & \frac{m}{m-1} \int_M \left( \frac{1}{4c} |{\rm d} |u||^2 + \frac{1}{4} \scalM |u|^2 + \langle u, \mathscr{R}^{\mathcal{E} \oplus \mathcal{F}} u \rangle \right) dV \\
        & + \int_M  \left \langle u, \alpha_2 f^2 u +  \alpha_2 c({\rm d} f) \sigma u \right \rangle  dV,
    \end{aligned}
    \]
    where $c>\frac{m-1}{4m}$ and $\alpha_2>0$ are constants, and
    \[
    \mathscr{R}^{\mathcal{E} \oplus \mathcal{F}} = \sum_{i<j} c(e^i)c(e^j) ({\rm id}_{\slashed{S}_M}\otimes R^{\mathcal{E} \oplus \mathcal{F}}(e_i, e_j))
    \]
    denotes the curvature endomorphism. Since $\mathscr{R}^{\mathcal{E} \oplus \mathcal{F}}$ depends linearly on the curvature, we have the pointwise bounds
    \begin{equation}\label{eq:curvature_term_bound_index_case}
        \langle u, \mathscr{R}^{\mathcal{E}\oplus \mathcal{F}} u\rangle \geq -\frac{m(m-1)}{2} \|R^{\mathcal{E}\oplus \mathcal{F}}\|_{\infty} \, |u|^2 \quad \text{and}\quad \left \langle u, c({\rm d} f) \sigma u \right \rangle \geq -|{\rm d}f| \, |u|^2,
    \end{equation}
    hence
    \begin{align*}
           0 \geq & \frac{m}{4c(m-1)} \int_M |{\rm d} |u||^2 dV + \frac{m}{m-1} \int_M \left( \frac{1}{4} \scalM |u|^2 -\frac{m(m-1)}{2} \|R^{\mathcal{E}\oplus \mathcal{F}}\|_{\infty} \, |u|^2 \right) dV \\
        & + \alpha_2 \int_M  (f^2-|{\rm d}f|)|u|^2 dV.
    \end{align*}

    Recall $f=\varepsilon \psi$ and $\supp({\rm d}\psi)\subset \overline{\mathcal{U}}\setminus K$. By \eqref{defn:bottom_spectrum},
    \begin{align*}
        0 \geq & \Big( \frac{m}{4c(m-1)} \lambda_1(M,g_M) + \frac{m}{4(m-1)} \inf_M \scalM \\
               & \quad - \frac{m^2}{2} \|R^{\mathcal{E}\oplus \mathcal{F} } \|_{\infty} - \varepsilon \alpha_2 \sup_{ \overline{\mathcal{U}}\setminus K } |{\rm d} \psi| \Big)  \|u\|_{L^2(M)}^2,
    \end{align*}
    where $\|u \|_{L^2(M)}^2 = \int_M |u|^2 dV$. Since $\|u \|_{L^2(M)}^2>0$, dividing by this quantity and letting $\varepsilon\to 0$ gives
    \[
      \inf_M \scalM + c^{-1} \lambda_1(M,g_M) \leq 2m(m-1) \|R^{\mathcal{E}\oplus \mathcal{F} } \|_{\infty}.
    \]
    Taking the infimum over all $\widehat{A}$-admissible GL-pairs completes the even case.

    \medskip
    \textbf{Case 2: $m$ is odd.}
    If there are no $\widehat{A}$-admissible GL-triples, then $\glcw(M)=0$ and the statement is trivial. Otherwise, let $(\mathcal{E},\mathcal{F},\rho_{M})$ be such a triple satisfying
    \begin{equation}\label{eq:relative_topo_obst_odd}
      \int_{M} \Af(M) \wedge (\ch(\mathcal{E})- \ch(\mathcal{F})) \wedge \ch(\rho_{M}) \neq 0.
    \end{equation}

    There exists a compact $K\subset M$ outside which the triple is trivial. Let $\mathcal{W}$ be a compact submanifold with smooth boundary whose interior contains $K$, and $\mathcal{U} \subset \operatorname{int}(\mathcal{W})$ an open neighborhood of $K$. Denote by $\rho^+$ and $\rho^-$ the restrictions of $\rho_M$ to $\mathcal{W}$ and its opposite $\mathcal{W}^-$. Then $\rho^+=\rho^-$ near $\partial\mathcal{W}$, and they glue to $\widetilde{\rho}\in C^{\infty}({\rm d}\mathcal{W},U(l))$. Set $\rho=\rho^+\oplus \rho^-$.

    Let $S$ be the relative Dirac bundle associated with $(\mathcal{E},\mathcal{F})$ and $\mathcal{D}$ the Dirac operator (see~\cite{CZ24}*{Example~2.5}). Let $f$ be the admissible potential from Case 1. Consider the family $\mathcal{B}_{f}(t)=(1-t)\mathcal{B}_{f} + t \rho_M^{-1} \mathcal{B}_{f} \rho_M$ for $t\in [0,1]$. By the splitting theorem for spectral flow~\cite{Shi24+}*{Theorem~2.10},
    \begin{equation}\label{eq:to_splitting_odd}
        \sflow(\mathcal{B}_{f},\rho_M) = \sflow(\mathcal{B}_{f,-1}^{\mathcal{W}},\rho) + \sflow(\mathcal{B}_{f,-1}^{ \overline{M\setminus \mathcal{W}} },\rho_{ \overline{M\setminus \mathcal{W}} } ).
    \end{equation}

    As in Case 1, the second term vanishes by~\cite{Shi24+}*{Lemma~3.4}. Using~\cite{Shi24+}*{Theorem~3.10},
    \begin{equation}\label{eq:correspond_odd}
        \sflow(\mathcal{B}_{f,-1}^{\mathcal{W}},\rho) = \sfrel(M;\mathcal{E},\mathcal{F},\rho_M).
    \end{equation}
    By the cohomological formula for spectral flow~\cite{Ge93}*{Theorem~2.8},
    \begin{equation}\label{eq:cohomological_formula_relative_sflow}
        \begin{aligned}
            \sfrel(M;\mathcal{E},\mathcal{F},\rho_M) 
            &= \int_{{\rm d}\mathcal{W}} \Af({\rm d}\mathcal{W}) \wedge \ch(V(\mathcal{E},\mathcal{F})) \wedge \ch(\widetilde{\rho}) \\
            &=\int_{\mathcal{W}} \Af(\mathcal{W}) \wedge ( \ch(\mathcal{E})-\ch(\mathcal{F}) ) \wedge \ch(\rho^+) \\
            &=\int_M \Af(M) \wedge ( \ch(\mathcal{E})-\ch(\mathcal{F}) ) \wedge \ch(\rho_M) \neq 0,
        \end{aligned}
    \end{equation}
    where we used that $\mathcal{E}\cong\mathcal{F}$ and $\rho^+=\rho^-$ near $\partial\mathcal{W}$, and that the triple is trivial outside $K\subset\mathcal{W}$.

    Combining \eqref{eq:to_splitting_odd}--\eqref{eq:cohomological_formula_relative_sflow} gives $\sflow(\mathcal{B}_{f},\rho_M) \neq 0$. Hence for some $t\in [0,1]$ there exists a nonzero $u\in \ker(\mathcal{B}_{f}(t))$. Since $\partial M=\emptyset$, the boundary term in~\cite{Liu25a+}*{(1.17)} vanishes. Using~\cite{Liu25a+}*{(1.17)} and~\cite{Shi25+}*{(4.1)},
    \[
    \begin{aligned}
        0 \geq & \frac{m}{m-1} \int_M \left( \frac{1}{4c} |{\rm d} |u||^2 + \frac{1}{4} \scalM |u|^2 + \langle u, \mathscr{R}^{\mathcal{E} \oplus \mathcal{F}} u - 4t(1-t) \mathscr{R}^{\rho_M} u \rangle \right) dV \\
        & + \int_M  \left \langle u, \alpha_2 f^2 u +  \alpha_2 c({\rm d}f) \sigma u \right \rangle  dV,
    \end{aligned}
    \]
    where $c>\frac{m-1}{4m}$, $\alpha_2>0$, and $\mathscr{R}^{\rho_M}=\sum_{i<j} c(e^i)c(e^j)R^{\rho_M}(e_i,e_j)$. The pointwise estimate
    \[
        \langle u, \mathscr{R}^{\mathcal{E} \oplus \mathcal{F}} u - 4t(1-t) \mathscr{R}^{\rho_M} u \rangle \geq -\frac{m(m-1)}{2} (\|R^{\mathcal{E}\oplus\mathcal{F}}\|_{\infty} + \|R^{\rho_M}\|_{\infty}) |u|^2
    \]
    leads, after the same manipulations as in Case 1, to
    \[
      \inf_M \scalM + c^{-1} \lambda_1(M,g_M) \leq 2m(m-1) (\|R^{\mathcal{E}\oplus \mathcal{F} } \|_{\infty} + \|R^{\rho_M}\|_{\infty}).
    \]
    Taking the infimum over all $\widehat{A}$-admissible GL-triples completes the odd case.
\end{proof}

\begin{proof}[Proof of Theorem~\ref{mthm:codimension_zero}]
    By Proposition~\ref{pro:key_proposition_odd}, for any $\epsilon>0$,
    \[
    \inf_M \scalM + \Big(\frac{m-1}{4m}+\epsilon\Big)^{-1} \lambda_1(M,g_M) \leq \frac{2m(m-1)}{\glcw(M)}.
    \]
    Letting $\epsilon \to 0^+$ yields the desired inequality.
\end{proof}

Corollaries~\ref{cor:complete_manifold_NNSC_mean_convex_boundary} and~\ref{thm:bottom_spectrum_all_dimension} follow immediately from Theorem~\ref{mthm:codimension_zero}.

\begin{proof}[Proof of Corollary~\ref{cor:sectional_bottom_spectrum}]
    This is a consequence of Theorem~\ref{thm:bottom_spectrum_all_dimension} and the Cartan--Hadamard theorem.
\end{proof}

\section{Proof of Theorem~\ref{mthm:codimension_zero_boundary}}\label{sec:proof_boundary}

We now adapt the previous argument to the boundary case.

\begin{proof}[Proof of Theorem~\ref{mthm:codimension_zero_boundary}]
    We treat the even-dimensional case; the odd case is analogous and omitted. Let $(\mathcal{E},\mathcal{F})$ be an $\widehat{A}$-admissible GL-pair. Let $\mathcal{W}$ be a compact submanifold containing $\partial M$ with smooth boundary, whose interior contains the support $K$, and let $\mathcal{U} \subset \operatorname{int}(\mathcal{W})$ be an open neighborhood of $K$.

    Let $S$ be the relative Dirac bundle associated with $(\mathcal{E},\mathcal{F})$ and $\mathcal{D}$ the Dirac operator (cf.~\cite{CZ24}*{Example~2.5}). Choose a smooth cut-off function $\psi: M\to [0,1]$ with $\psi=0$ on $K$ and $\psi=1$ outside $\overline{\mathcal{U}}$. For $\varepsilon>0$, set $f:=\varepsilon\psi$; this is an admissible potential.

    Consider the Callias operator $\mathcal{B}_{f,-1}$ with sign $s=-1$. As in the proof of Proposition~\ref{pro:key_proposition_odd}, one checks that
    \[
        \ind(\mathcal{B}_{f,-1}) = \int_M \Af(M)\wedge (\ch(\mathcal{E})-\ch(\mathcal{F})) \neq 0,
    \]
    so there exists a nonzero $u\in \ker(\mathcal{B}_{f,-1})$. From~\cite{Liu25a+}*{(1.18)} and~\cite{CZ24}*{(2.18)},
    \[
    \begin{aligned}
    0 \geq & \frac{m}{4c(m-1)} \int_M| {\rm d} |u| |^2 dV + \frac{m}{m-1} \int_M \left( \frac{1}{4} \scalM |u|^2 + \langle u, \mathscr{R}^{\mathcal{E} \oplus \mathcal{F}} u \rangle \right) dV \\
      &  + \int_M \langle u, \alpha_2 f^2 u + \alpha_2  c({\rm d}f) \sigma u \rangle dV + \int_{\partial M} 
      \underbrace{ \big( \alpha_2 f + \frac{m}{2} H \big) }_{\geq 0 \text{ (since } f, H \geq 0\text{)}} |u|^2 dA,
    \end{aligned}
    \]
    where $c>\frac{m-1}{4m}$ and $\alpha_2>0$. Dropping the nonnegative gradient and boundary terms and using \eqref{eq:curvature_term_bound_index_case} yields
    \[
       0 \geq \frac{m}{m-1} \int_M \left( \frac{1}{4} \scalM |u|^2 -\frac{m(m-1)}{2} \|R^{\mathcal{E}\oplus \mathcal{F}}\|_{\infty} \, |u|^2 \right) dV + \alpha_2 \int_M (f^2-|{\rm d}f|)|u|^2 dV.
    \]

    Since $f=\varepsilon \psi$ and $\supp({\rm d}\psi)\subset \overline{\mathcal{U}}\setminus K$, the same limiting argument as before leads to
    \[
        \inf_M \scalM \leq 2m(m-1) \|R^{\mathcal{E}\oplus\mathcal{F}}\|_{\infty}.
    \]
    Taking the infimum over all $\widehat{A}$-admissible GL-pairs gives the desired inequality.
\end{proof}

\textbf{Acknowledgements.} 
The author is deeply grateful to Professor Weiping Zhang for his insightful discussions and suggestions. He also thanks Professor Guangxiang Su for his valuable comments, and Professors Zhenlei Zhang and Bo Liu for their continuous encouragement. This work is partially supported by the National Natural Science Foundation of China (Grant No. 12501064), the China Postdoctoral Science Foundation (Grant Nos. 2025M773075, GZC20252016, and 2025T002TJ), and the Nankai Zhide Foundation.


\end{document}